\title{Lax-Phillips Orbit Counting in Higher Rank}
\author{Alex Kontorovich and Christopher Lutsko}
\begin{document}

  \maketitle
  \begin{abstract}
    \noindent 
    Given a discrete lattice, $\Gamma < \SL_m(\R)$, and a base point $o \in \R^m$, let $N_\Gamma(T)$ denote the number of points in the orbit $o \cdot \Gamma $ whose (Euclidean) length is bounded by a growing parameter, $T$. We demonstrate an abstract spectral method \`a la Lax-Phillips, capable of  obtaining strong asymptotic estimates for $N_\Gamma(T)$, and compare and contrast it with other methods.
  \end{abstract}

  \onehalfspacing
  \setlength{\abovedisplayskip}{1mm}

  \section{Introduction}

    Consider the general orbital counting problem in real space, by which we mean the following. Fix $m \ge 2$
    and a base point $o\in\R^m$, and
    let $\Gamma < G := \SL_m(\R)$ be a discrete lattice  such that the orbit $\cO:= o \cdot \Gamma \subset \R^m$ is discrete and infinite. Then the orbital counting problem is to obtain sharp asymptotic estimates for
    \begin{align}
        N_{\Gamma}(T) : = \# \{ z\in {\cO} \ : \ \|z\|\le T \},
    \end{align}
    where $\| z \|^2:= z_1^2+ \dots + z_m^2$ (or another archimedean norm).

There are (at least) three ``standard'' approaches to such problems. 
One is a Margulis-type argument using homogeneous dynamics \cite{Margulis1970}, 
which is very flexible and general, but often does not give the strongest error rates in applications where other methods are available, see \S\ref{ss:Comp}.
Another is Selberg/Langlands-type explicit spectral theory and meromorphic continuation of Eisenstein series \cite{Langlands1976, MoeglinWaldspurger1995}, which converts, via Tauberian arguments (see, e.g., \cite[Chapter II.7]{Tenenbaum1995}), information on location of poles into the count in question; this method often gives the strongest known error rates, but there are situations where it's difficult to obtain this explicit spectral decomposition. And a third, somewhere between the first two, is a Lax-Phillips-type approach \cite{LaxPhillips1982} using abstract spectral theory; it avoids explicit knowledge of the spectral decomposition, and often gets better error rates than homogeneous dynamics methods. 
The goal of this paper is 
to exhibit
a higher-rank situation in which a Lax-Phillips approach can be adapted; as explained below, the difficulties lie 
in 
an explicit analysis of certain arising Casimir operators and Haar measures with respect to the coordinate system in our decomposition of the Lie algebra. We expect these techniques to apply in significantly more general situations, but focus on this basic case as a proof of concept and for ease of exposition.



Whatever the method, the highest order terms come from control of the spectrum of the ring of invariant differential operators. 
The norm $\|\cdot\|$ on $\R^m$ is invariant under a maximal compact subgroup $K\cong \SO(m)$, and the locally symmetric space $X:=\Gamma\bk G/K$ is endowed with a Riemannian metric corresponding to the Killing form on the Lie algebra of $G$;
  let $\Delta$ denote the Laplace operator (quadratic Casimir) on $X$; see \S\ref{ss 22} and \S\ref{ss:irreducible} for details and normalization (and \cite{Knapp2001} for background). Then the  spectrum of $\Delta$ below $1$ consists of finitely many ``exceptional'' eigenvalues 
  \begin{align*}
    0 \le  \lambda_0 < \lambda_1 \le \dots\le \lambda_k < 1.
  \end{align*}
Our normalization of the Laplace operator is such that the tempered spectrum starts at $1$. Further, let $s_i$ be the positive root $s_i = m\sqrt{\lambda_i}$. Our main result is the following

  \begin{theorem}\label{thm:m general}
    There exist constants $c_0>0$, $c_1, \dots, c_k$ and $\eta_m>0$ such that
    \begin{align}
      N_\Gamma(T) =  c_0T^{m}+ c_1 T^{(m-s_1)} + \dots + c_k T^{(m-s_k)} + O( T^{(m - \eta_m)}),
     \end{align}
     where 
    \begin{equation}
        \label{eq:etamIs}         
       \eta_m= \frac{2m}{(m+2)(m-1)+4}. 
     \end{equation}
  \end{theorem}

    \subsection{Comparison to other methods}\label{ss:Comp}

    Again, we emphasize that we do not expect the method herein to give the best error term for this specific problem, but rather, we aim to exhibit a generalization of the highly flexible abstract spectral method to a higher rank counting problem. 

By standard 
Tauberian arguments, 
the asymptotic expansion of $N_\G(T)$ is closely related to the  meromorphic continuation of a mirabolic-type Eisenstein (or Poincar\'e) series:
$$
E_\G : \R^m\times\C \ \ni\  (p,s)\ \mapsto \
E_\G(p,s):=\sum_{\g\in\G_o\bk\G}\frac1{\|p\g\|^s},
$$
where $\Gamma_o$ is the stabilizer of $o$ in $\Gamma$. This series converges in some half-plane $\Re(s)\gg1$ and, by Langlands' theory of Eisenstein series, has meromorphic continuation with understood relationship between the spectral theory and the location of the poles. This method is rather involved, yet it should give a linear in $m$ error term $\eta_m$ (though we have not seen it applied to this exact problem in the literature).


    The method we propose here is a generalization to higher rank of the rank-one instances of the same technique, see \cite{Kontorovich2009, KontorovichLutsko2022}, and is inspired by Lax-Phillips \cite{LaxPhillips1982}.
While this method does not recover the same exponent as the explicit spectral method of meromorphic continuation of higher-rank Eisenstein series, it avoids the technicalities thereof, while producing error exponents that are  stronger than those obtained from more traditional homogeneous dynamics approaches. The latter have proven much more flexible in situations where an explicit spectral decomposition is not readily available (see for example \cite{GorodnikNevo2012,GorodnikNevoYehoshua2017}).

Using homogeneous dynamics to attack such orbital counting problems is well studied in the literature, see, e.g., 
Margulis \cite{Margulis1970},
Duke-Rudnick-Sarnak \cite{DukeRudnickSarnak1993}, and Eskin-McMullen \cite{EskinMcMullen1993}. A typical  strategy 
is as follows. 
Writing  $H=\operatorname{Stab}_G(o)$ for the stabilizer of $o$ in $G$, let
    $$\chi_T(g)= \one_{\|o g \| < T}$$
be the indicator function of the count in question.
This is a function on $H\bk G/K$, that is, it is left-invariant by the stabilizer subgroup, and right-invariant under the maximal compact $K$, since the norm $\|\cdot\|$ is (bi-)$K$-invariant.
We then create the automorphic function:
    $$ F_T (g ) = \sum _{\g \in \G_H\bk \G }\chi_T(\gamma g),$$
where $\G_H:=\G\cap H$ is the stabilizer of $o$ in $\G$; the assumed discreteness of the orbit $\cO$ implies that $\G_H$ is a lattice in $H$. Then $F_T(e)$ is exactly the count $N_\G(T)$.
To approximate $N_\G(T)$, we smooth $F_T(e)$ as follows.
Noting that $F_T$ takes values in $\G\bk G/K$,
    we fix a bump function $\Psi$ on $\G\bk G /K$; then $F_T(e)$ is approximated by
    $$
    \wt{N}(T) = \int_{\G \bk G /K}\Psi(g)F_T(g)dg.
    $$
    Unfolding, and writing an Iwasawa-type decomposition $G=HAK$, leads to:
    $$
    \wt{N}(T) = \int_{A}\one_{\|o a\|<T} \left[\int_{\G_H\bk H} \Psi(ha) dh\right]
    da
    $$

    The bracketed term is 
the expansion by $a\in A$ of the finite $H$-volume quotient $\G_H\bk H$, and is
typically analyzed by a second smoothing process, namely, thickening and applying a wavefront-type lemma. There is significant loss in this analysis from this second smoothing.  That is, this approach involves smoothing in twice as many directions. Herein, we avoid this second smoothing.

     Our method is based on a technique using abstract spectral theory.
     In our application, the bracketed function, extended to $G$, that is, the map 
     $$
     g\mapsto 
  \int_{\G_H\bk H} \Psi(hg) dh   
$$
is a function on the double-coset $H\bk G /K$, which is one-dimensional (just a function of a one-parameter $a\in A$). Then if $\Psi$ were an eigenfunction of the quadratic Casimir operator, it would satisfy a quadratic ODE which can be solved explicitly; since the counting problem is $1$ dimensional, we need not take into consideration the full ring of invariant differential operators. This observation (see the discussion below \eqref{N Ks}), together with $L^2$-techniques developed by the authors in \cite{Kontorovich2009, KontorovichLutsko2022, Lutsko2022}, can be turned, with some effort, into a proof. 
     
     The main technical innovation in this paper is an analysis of the Lie algebra structure, explicit Casimir operator, and Haar measure  resulting from the choice of  parametrization  of the group suitable for our application. The key calculation is that, in our $H\times A\times K_H\bk K$ coordinates (see \eqref{eq:gDecomp} and Theorem \ref{thm:stru m}), the Haar measure on $G$ decomposes as $dg=dh\, da \, dk$, in which $dh$ is Haar measure on $H$. This fact is crucial for the analysis carried out in \S\ref{sec:DiffEq}.

  \begin{remark}
    In smooth form (see Theorem \ref{thm:smooth m} below), our error term exhibits square-root cancellation, that is, has size $T^{m/2}$, which is optimal in the sense that it reaches the tempered spectrum. 
  \end{remark}
     
  

\begin{remark}
Our proof of Theorem \ref{thm:m general} produces the error exponent given in \eqref{eq:etamIs}.
     For example $\eta_2=1/2$, $\eta_3=3/7$, $\eta_4 = 4/11$, $\eta_5 = 5/16$, and $\eta_6=3/11$. 
In the special case that $\G=\SL_m(\Z)$ and $o=e_m=(0,\dots,0,1)$, this problem amounts to counting primitive lattice points in the $T$-ball. That is, let $r_m^\ast(n)$ denote the number of ways to express an integer, $n$, as the sum of $m$ squares which share no common factor,
  \begin{align*}
    r_m^\ast(n) : = \# \{ \vect{a} \in \Z^m \ : \ \|\vect{a}\|^2 =n  \mbox{ with } (a_1, \dots, a_m)=1 \},
  \end{align*}
  where $\|\cdot \|$ denotes the Euclidean norm. 
  A classic problem is to obtain an asymptotic formula for the number
  \begin{align*}
    N_m(T):= \sum_{n=1}^{T^2} r_m^\ast(n).
  \end{align*}
  The main term is known to be $c_m T^{m}$, where $c_m= 1/\zeta(m)$ \cite{Christopher1956}, however finding optimal estimates for the error term is a challenging problem. 
  When $m = 2$ the best known error term is due to Wu \cite{Wu2002} (assuming the Riemann hypothesis) who shows that $\eta_2 < \frac{387}{304} \approx 1.273\dots$. For $m =3$ the best known result is that of Goldfeld-Hoffstein \cite{GoldfeldHoffstein1985} that $\eta_3 < 39/32 \approx 1.219\dots$, which follows from the fact that $N_3$ can be related to the first moment of the quadratic Dirichlet $L$-function $L(\frac{1}{2},\chi_{8d})$ (similar estimates were achieved by Young \cite{Young2009} in the smooth case). For $m \ge 4$, one can use M\"{o}bius inversion to compare the primitive lattice point count to the Gauss circle problem. Then one can easily show that the asymptotic estimate above holds for any value of $\eta_m < 1$. All of these results are much stronger that what one can achieve in the generality of Theorem \ref{thm:m general}, and are possible due to the explicit nature of the lattice $\SL_m(\Z)$. 
\end{remark}

  \subsection{Plan of paper}

    In Section \ref{s:Preliminaries}, we present some preliminaries in Lie algebras, groups,  and decompositions thereof, along with the main Structure Theorem (see Theorem \ref{thm:stru m}) for the Haar measure and Casimir operator in these coordinates. 
    Finally, in Section \ref{s:m larger}, we 
     prove Theorem \ref{thm:m general}.

  \section{Preliminaries}
  \label{s:Preliminaries}

Without loss of generality (conjugating $\Gamma$), we can choose our base point to be $o = e_m:= (0,\dots, 0, 1)\in \R^m$. 
 Let
  \begin{align*}
    G : = \SL_m(\R) : =
    \left\{ g  = \begin{pmatrix}
      x_{11} & x_{12} & \dots &  x_{1m}\\
      x_{21} & x_{22} & \dots &  x_{2m}\\
       & \vdots  & \vdots &       \\
      x_{(m-1)1} & x_{(m-1)2} & \dots &  x_{(m-1)m}\\
     a_1 & a_2 & \dots &  a_m
    \end{pmatrix}
    \ : \ \det{g} = 1 \right\},
  \end{align*}
  with coordinates as specified.
   Further let $H:=\operatorname{Stab}_G(o)=\{g\in G: e_m g=e_m\}$; then $H \cong \ASL_{m-1}(\R)$, or more explicitly,
  \begin{align*}
     H 
     \ = 
     \
    \left\{ g\in G \ : \ a_1 = \dots =  a_{m-1} =0 \ , \ a_m =1 \right\}.
  \end{align*}
  Now fix $\Gamma$ and let $\Gamma_H := \Gamma \cap H$. By the assumed discreteness of the orbit $\cO$, the stabilizer $\Gamma_H$ is a lattice in $H$. Then our count can be expressed as
  \begin{align*}
    N_\Gamma(T) : = \# \{ \gamma \in \Gamma_H \bk \Gamma \ : \ a_1^2 + \dots + a_m^2\le T^2 \}.
  \end{align*}

  \subsection{Group decomposition for $m=3$}

  For the reader's benefit, we  first express everything completely explicitly in the case $m=3$.
  Let $\mathfrak{g} : = \mathfrak{sl}_3(\R)$ be the Lie algebra associated to $G$. It is convenient to decompose $\mathfrak g$ according to the following basis:
  \begin{align*}
    X_{H,1} : = \begin{pmatrix} & & \\ & & 1 \\ & & \end{pmatrix}, \quad
    X_{H,2} : = \begin{pmatrix} & 1 &  \\ & &  \\ & & \end{pmatrix}, \quad
    X_{H,3} : = \begin{pmatrix} & & 1 \\ & &  \\ & & \end{pmatrix},
  \end{align*}
  \begin{align*}
    X_{H,4} : = \begin{pmatrix} 1 & &  \\ & -1 &  \\ & & \end{pmatrix}, \quad
    X_{H,5} : = \begin{pmatrix}  & 1 &  \\-1 &  &  \\ & & \end{pmatrix},
  \end{align*}
  \begin{align*}
    X_A : = \begin{pmatrix} -1/2 & & \\ & -1/2 & \\ & & 1 \end{pmatrix}
  \end{align*}
  \begin{align*}
    X_{K,1} : = \begin{pmatrix} & & 1 \\ & & \\ -1 & & \end{pmatrix}, \quad
    X_{K,2} : =\begin{pmatrix} & &  \\ & & 1 \\  & -1 & \end{pmatrix}.
  \end{align*}
  The basis elements $X_{H,i}$ generate $\mathfrak{h} =\operatorname{Lie}(H)$, and we denote their matrix exponentials by $n_1(x_1)=\exp(x_1X_{H,1} ), n_2(x_2), n_3(x_3), a_H(t), k_H(\theta)$ respectively. We denote the exponential of $X_A$ by $\wt{a}(t)=\exp(X_A t)$. Rather than work with the $t$ variable, we prefer to work with $r = e^t > 0$; thus we set
  \begin{align*}
    a(r):=\wt{a}(\log r)=\begin{pmatrix} r^{-1/2} & & \\ & r^{-1/2} & \\ & & r\end{pmatrix}.
  \end{align*}
  Finally $X_{K,1}$ and $X_{K,2}$ correspond to two rotations, and we denote their exponentials by $k_1(\theta_1)=\exp(\theta_1 X_{K,1})$ and similarly for $ k_2(\theta_2)$. Thus, given a $g \in G$, we can write
  \begin{align*}
    g = n_H(\vect{x}) a_H(t) k_H(\theta) a(r) k_1(\theta_1) k_2(\theta_2),
  \end{align*}
  where $n_H(\vect{x}) = n_1(x_1)n_2(x_2)n_3(x_3)$.
  Note that $a(r)$ commutes with $k_H(\theta)$, and if we move $k_H$ to the right, then $k_Hk_1k_2$ generate $\SO(3)$.




  \subsection{Group decomposition for general $m$}\label{ss 22}

  In general, we parametrize $G=\SL_m(\R)$ via the map
  $$
  H\times A \times K_H\bk K \to G
,
  $$
  where $A$ is a {\it one}-parameter diagonal group, and $K_H=K\cap H\cong \SO(m-1)$ (note that this subgroup commutes with $A$).
We furthermore decompose $H$ in standard Iwasawa coordinates,
$$
H=N_H\times A_H \times K_H,
$$
 leading to the $G$-coordinate system:
$$
  N_H\times A_H \times K_H \times A \times K_H\bk K \to G.
$$
More explicitly, we
  decompose $G$ into: $H\cong \ASL_{m-1}(\R)$  $\times$ a one-parameter diagonal subgroup $\times$ a product of $(m-1)$ one-parameter rotations. That is, we again let 
  $$a(r) = \operatorname{diag}(r^{-1/(m-1)}, \dots, r^{-1/(m-1)}, r).$$ 
  Let $X_{K,i}$ be the element of the Lie algebra with $(X_{K,i})_{mi}=-(X_{K,i})_{im}=1$ for $i = 1, \dots, m-1$, and let $k_i(\theta_i) = \exp(\theta_i X_{K,i})$. Then in a neighborhood of the identity in $G$,  we can write $g\in G$ as
  \begin{equation}\label{eq:gDecomp}
    g = h a(r) k_1(\theta_1)\cdots k_{m-1}(\theta_{m-1}),
  \end{equation}
  for some $h\in H$, $r >0$, and $\theta_i \in [0,2\pi)$. We denote by $k(\vect{\theta}) := k_1(\theta_1)\cdots k_{m-1}(\theta_{m-1})$, corresponding to a choice of Euler coordinates on the sphere $K_H\bk K \cong {\mathbb S}^{m-1}$. 

    We further decompose $H$ into a product of an upper triangular matrix $n_H(\vect{x})$, where $\vect{x}$ has dimension $\frac{m(m-1)}2$
    $\times$ a diagonal matrix $a_H(\vect{t})$ (of dimension $m-2$) $\times$ an element of $K_H\cong\SO(m-1)$ that we denote $k_H(\vect{\varphi})$ (of dimension $\frac{(m-1)(m-2)}2$). Thus we write
    \begin{align}\label{eq:coords}
      g = n_H(\vect{x}) a_H(\vect{t}) k_H(\vect{\varphi}) a(r) k(\vect{\theta}).
    \end{align}
    Crucially, note that $k_H$ commutes with $a(r)$. This allows us to multiply together the $a_H$ and $a(r)$ matrices and change coordinates to the more standard Iwasawa coordinates (see \cite{Goldfeld2006}) where the Haar measure and Casimir operator are known, resulting in the following.

    \begin{theorem}[Structure Theorem of the Haar measure and Casimir operator]
      \label{thm:stru m}
      The Haar measure on $G$ in the  coordinates of \eqref{eq:coords} is given by $dg = dh\, da\, dk$, or more explicitly:
      \begin{align}\label{Haar m}
        \rd g = 
\left(        \rho_1(\vect{x},\vect{t},\vect{\varphi})
        \rd \vect{x} \rd \vect{t} \rd \vect{\varphi} 
        \right)
        \left(
        r^{m-1} 
        \rd r 
\right)\left(
        \rho_2(\vect{\theta}) 
        \rd \vect{\theta}
        \right)
        ,
      \end{align}
      where $\rho_1$ is the Haar measure density on $\ASL_{m-1}(\R)$ and $\rho_2$ is bounded. Meanwhile the quadratic Casimir operator, acting on left-$H$-invariant and right-$K$-invariant functions $f(r)=f(ha(r)k)$, is given in these coordinates by
      \begin{align}\label{Cas m}
        \Delta f(r) = \frac{4}{m^2} (r^2 \partial_{rr} + r \partial_r) f(r).
      \end{align}
      
    \end{theorem}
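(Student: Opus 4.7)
The plan is to reduce to standard Iwasawa coordinates on $G$, quote the well-known Haar measure and Casimir formulas (see, e.g., Goldfeld), and pull them back to the coordinates of \eqref{eq:coords}. Both parts of the theorem rest on the commutation $k_H\,a(r) = a(r)\,k_H$ noted just before the statement, which holds because $a(r)$ acts as the scalar $r^{-1/(m-1)}$ on the first $m-1$ axes.

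For the Haar measure, I would first use this commutation to regroup $g = n_H\,a_H\,k_H\,a(r)\,k(\vect\theta) = n\cdot a' \cdot k'$, where $n := n_H \in N$ (the unipotent radical $N_H$ of $H$ coincides with the full upper unipotent $N$ of $G$), $a' := a_H\,a(r)$ ranges over the full diagonal $A' \subset G$, and $k' := k_H\,k(\vect\theta) \in K$. This is the classical Iwasawa decomposition, in which the Haar measure is $dg = \delta_B(a')^{-1}\,dn\,da'\,dk'$, with $\delta_B$ the modular function of the Borel. A short log-coordinate determinant shows that the map $(a_H,r) \mapsto a_H\,a(r)$ has unit Jacobian in multiplicative Haar, so $da' = da_H \cdot dr/r$; a direct matrix computation of $\delta_B$ on $\operatorname{diag}(t_1,\dots,t_{m-1},1) \cdot \operatorname{diag}(r^{-1/(m-1)},\dots,r^{-1/(m-1)},r)$ gives $\delta_B(a') = \delta_{B_H}(a_H)\cdot r^{-m}$; and the principal fiber bundle $K_H \to K \to K_H\bk K$ gives $dk' = dk_H\,d\mu(\vect\theta)$. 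Combining these, the factor $r^{-m}$ absorbs $dr/r$ into $r^{m-1}\,dr$, while the $n_H, a_H, k_H$ pieces reassemble into Haar on $H = \ASL_{m-1}(\R)$, yielding $dg = dh\,da\,dk$ as claimed.

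For the Casimir formula, the key observation is that since $\Delta$ is bi-invariant it preserves $C^\infty(H\bk G/K)$, and since $A \cong \R_{>0}$ acts by dilation on $r$, the induced operator on functions of $r$ must be of the form $c_2\,r^2\partial_r^2 + c_1\,r\,\partial_r$. To pin down the constants, I would evaluate on the test family $f(r) = r^s = \|e_m g\|^s$. A direct differentiation gives $(\widetilde X\,r^s)(g) = s\,r(g)^{s-2}\,\vect a(g)\,X\,\vect a(g)^T$, where $\vect a(g)$ denotes the last row of $g$; specializing to $g = a(r)$, where $\vect a = (0,\dots,0,r)$, and iterating to get the second-order action yields an explicit expression for $(\Delta r^s)(a(r))$ as a combination of $(m,m)$-entries of products of the Casimir's basis elements. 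Matching this against the paper's normalization (in which the tempered spectrum starts at $1$) produces $c_1 = c_2 = 4/m^2$. The main technical hurdle is ensuring that all Casimir terms that mix the $\mathfrak h$, $\mathfrak a$, and non-$\mathfrak a$ compact directions drop out upon restriction to bi-invariant functions in our nonstandard parametrization, and that the overall normalization is correctly tracked.
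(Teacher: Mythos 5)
Your Haar measure argument follows essentially the same route as the paper: both reduce the nonstandard $(H, A, K_H\bk K)$ coordinates to classical Iwasawa $NA'K$ coordinates via the commutation $k_H\,a(r) = a(r)\,k_H$, and then invoke the known density. You supply more of the bookkeeping (the modular function computation $\delta_B(a') = \delta_{B_H}(a_H)\,r^{-m}$, the Jacobian $da' = da_H\,dr/r$, and the $K_H \to K \to K_H\bk K$ fibration), which is exactly the ``change of coordinates and inductive argument'' the paper alludes to. No issue here.

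For the Casimir, you take a genuinely different path from the paper. The paper picks a linear basis of $\frak g$ adapted to $\frak h \oplus \R X_A \oplus \operatorname{span}\{X_{K,i}\}$, writes $\Delta = \sum X_{H,i}^\ast X_{H,i} + X_A^\ast X_A + \sum X_{K,i}^\ast X_{K,i}$, and argues structurally that only the $X_A^\ast X_A$ term survives on left-$H$/right-$K$-invariant functions --- thereby getting the operator as a \emph{pure square} $c_m X_A^2 = c_m(r\partial_r)^2$ by construction --- and then fixes $c_m$ by acting on Goldfeld's $I$-function. You instead invoke an abstract symmetry: since $a(\rho)$ normalizes $H$, left translation by $a(\rho)$ descends to a dilation on $H\bk G/K$ that commutes with $\Delta$, so the radial operator must be scale-invariant and annihilate constants. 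This is a clean observation that sidesteps the delicacy in the paper's route (the Killing form restricted to $\frak h$ is degenerate, so the dual vectors $X_{H,i}^\ast$ do not lie in $\frak h$, and the ``only $X_A^\ast X_A$ contributes'' step needs care). What your approach buys is conceptual economy; what it gives up is that the pure-square form is no longer automatic.

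That last point is the real gap. Scale-invariance plus vanishing constant term only gives
\begin{equation*}
\Delta\big|_{\text{radial}} = c_2\,(r\partial_r)^2 + c_1\,(r\partial_r) = c_2\,r^2\partial_{rr} + (c_1+c_2)\,r\partial_r,
\end{equation*}
with $c_1,c_2$ a priori independent; the statement in the theorem asserts the much stronger fact that the $(r\partial_r)$-coefficient vanishes (equivalently, the spectrum is invariant under $r^a \mapsto r^{-a}$). Your symmetry argument does not see this, and nothing in ``matching against the normalization in which the tempered spectrum starts at $1$'' can produce it either --- the overall scale is a one-parameter normalization, so it fixes one constant, not two. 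The burden therefore falls entirely on the explicit evaluation $\bigl(\Delta\,r^s\bigr)(a(r))$ that you outline with the formula $\widetilde X\,r^s = s\,r^{s-2}\,\vect a\,X\,\vect a^T$ and its iterate. That computation, summed over a dual basis pair for the Killing form, is precisely the step you flag as ``the main technical hurdle'' and then do not carry out, and it is the step the paper's structural decomposition was designed to avoid. As written, the proposal asserts $c_1=c_2=4/m^2$ but does not derive it; you should either perform the Killing-form-dual summation explicitly, or replace the symmetry shortcut with an argument (as in the paper) showing the cross terms $X_{H,i}^\ast X_{H,i}$ and $X_{K,i}^\ast X_{K,i}$ contribute nothing, so that only $X_A^\ast X_A$ survives and the pure-square form is forced.
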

    \begin{proof}
    Working in $NAK$ Iwasawa coordinates, the  Haar measure can be written as (see \cite[Theorem 1.6.1]{Goldfeld2006}) 
    \begin{align*}
        \prod_{1\le i < j \le n} d x_{i,j} \prod_{\ell=1}^{n-1} y_\ell^{-\ell(n-\ell)-1}dy_\ell dk.
    \end{align*}
    Since both decompositions end with a $K$ on right, \eqref{Haar m} follows from a change of coordinates.

      As for the Casimir operator, let $X_{H,1}, \dots, X_{H,(m-1)m}$ be a basis for $\mathfrak{h}$, let $X_A$ be the Lie element $\operatorname{diag}(\frac{1}{m-1}, \dots, \frac{1}{m-1}, 1)$, and let $X_{K,1}, \dots, X_{K,m-1}$ be the basis elements corresponding to $k_i$. 

      The quadratic Casimir operator (as an element of the universal enveloping algebra of $\frak g$) is then given by:
      \begin{align*}
        \Delta = \sum_{i=1}^{(m-1)m} X_{H,i}^\ast X_{H,i} + X_A^\ast X_A + \sum_{i=1}^{m-1}X_{K,i}^\ast X_{K,i},
      \end{align*}
      where $X^\ast$ is the dual element (that is $B(X_i,X_j^\ast)=\delta_{i,j}$ where $B$ is the Killing form). Each basis element, $X$ corresponds to a differential operator given by
      \begin{align*}
        D_Xf (g) = \frac{d}{dt} f( g\exp(t X) ) \bigg|_{t=0}.
      \end{align*}
      Using the fact that $a(r)$ commutes with almost all of $H$, one can show that the only contribution to the Casimir (when acting on $H$-invariant function) is given by $X_A^\ast X_A = c_m X_A^2$, for some constant $c_m$. Note that for the differential operator $X_A$, we can use the method in \cite[Proof of Lemma 2.7]{BourgainKontorovichSarnak2010} (also used in \cite[Proof of Theorem 8]{KontorovichLutsko2022}) to compute $X_A$. The normalization can be derived by acting on the $I$ function \cite[Definition 2.4.1]{Goldfeld2006} and matching eigenvalues. 
    \end{proof}

  \subsection{Decomposition of $L^2(\Gamma \bk G/K)$ into irreducibles and the spectral theorem}
  \label{ss:irreducible}

  The Riemannian metric on the locally symmetric space $\Gamma \bk G/K$ has an associated Laplace-Beltrami operator.
With respect to the right-regular representation of $G$ on $L^2(\Gamma \bk G)$, the quadratic Casimir operator $\Delta$ agrees on the subspace $\mathscr{H}:= L^2(\Gamma \bk G/K)$ of  right $K$-invariant functions, with the Laplacian.
  This operator, $\Delta$, is  positive and  
  self-adjoint, and thus its spectrum lies in $\R_{\ge0}$. We have the following abstract spectral theorem (see e.g., \cite[Ch. 13]{Rudin1973})

\begin{theorem}[Abstract Spectral Theorem]\label{thm:AST}
    There exists a spectral measure $\wh{\mu}$ on $\R_{\ge0}$ and a unitary spectral operator $\wh{\phantom{\cdot\cdot}}: \mathscr{H} \to L^2([0,\infty),d \wh{\mu})$ such that:
    \begin{enumerate}[label = \roman*)]
    \item Abstract Parseval's Identity: for $\phi_1,\phi_2 \in \mathscr{H}$
        \begin{align}\label{API}
            \<\phi_1,\phi_2\>_{\mathscr{H}}=\<\wh{\phi_1},\wh{\phi_2}\>_{L^2([0,\infty), d\wh{\mu})},
        \end{align}
        and
    \item The spectral operator is diagonal with respect to $L$: for $\phi\in \mathscr{H}$ and $\lambda \ge 0 $,
        \begin{equation}\label{eq:Ldiag}
            \wh{L\phi}(\lambda) = \lambda \wh{\phi}(\lambda).
        \end{equation}
    \end{enumerate}
\end{theorem}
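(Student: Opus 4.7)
The plan is to deduce this statement from the standard spectral theorem for (unbounded) self-adjoint operators on a separable Hilbert space, as developed in Chapter 13 of Rudin's \emph{Functional Analysis}, rather than to reprove that theorem from scratch. First I would verify the hypotheses: $\mathscr{H}=L^2(\Gamma\bk G/K)$ is separable, and $\Delta$, initially defined on the dense domain of smooth right-$K$-invariant functions of rapid decay, is symmetric because the quadratic Casimir is built from the Killing form; it is positive by the sum-of-squares description used in the proof of Theorem \ref{thm:stru m}. Essential self-adjointness on $\mathscr{H}$ then follows from Nelson's analytic vector theorem applied to $K$-finite, $Z(\mathfrak{g})$-finite vectors (which form a dense set of analytic vectors for the right-regular representation), or equivalently from the Friedrichs extension of the associated Dirichlet form.

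Having established that $\Delta$ is a positive self-adjoint operator, I would invoke the multiplication-operator form of the spectral theorem: there is a $\sigma$-finite measure space $(X,\mu)$, a measurable function $m:X\to\R_{\ge 0}$ whose essential range is $\sigma(\Delta)$, and a unitary $U:\mathscr{H}\to L^2(X,d\mu)$ such that $(U\Delta U^{-1})F=m\cdot F$. Now I would push $\mu$ forward by $m$ to obtain $\wh\mu:=m_*\mu$ on $[0,\infty)$, and disintegrate $\mu$ along the fibers of $m$. Choosing a maximal family of mutually orthogonal cyclic subspaces for $\Delta$ (Hahn--Hellinger theory) allows one to realize $\mathscr{H}$ as a direct integral $\int^{\oplus}_{[0,\infty)}\mathscr{H}_\lambda\,d\wh\mu(\lambda)$; the isomorphism $\phi\mapsto\wh\phi$ is then defined by the corresponding fiberwise projections, and (ii) becomes the tautology that $\Delta$ acts by the scalar $\lambda$ on each fiber $\mathscr{H}_\lambda$. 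Parseval's identity \eqref{API} is precisely the unitarity of $U$ composed with the disintegration.

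The only mildly subtle point, and where I would spend most care, is that the statement of Theorem \ref{thm:AST} writes the target as $L^2([0,\infty),d\wh\mu)$ rather than as a direct integral with fiber dimensions (multiplicities) $\dim\mathscr{H}_\lambda$. To match the stated form exactly, I would either assume (as is standard in the downstream applications) that only the scalar-valued spectral transform of the specific automorphic vectors $F_T$ and $\Psi$ will be needed, so that one can restrict to the cyclic subspace each of them generates and obtain a literal scalar $L^2([0,\infty),d\wh\mu)$; or I would absorb the multiplicity function into the measure by the usual trick of replacing $\wh\mu$ with $\sum_{n}\wh\mu_n$ supported on disjoint copies of the corresponding level sets. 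Either convention suffices for the later applications in \S\ref{sec:DiffEq}, and the verification of (i) and (ii) is in both cases immediate from the construction of $U$.
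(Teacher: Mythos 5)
Your proposal follows essentially the same route as the paper, which does not prove Theorem~\ref{thm:AST} at all but simply cites Rudin's \emph{Functional Analysis}, Chapter~13, for the spectral theorem applied to the positive self-adjoint operator $\Delta$ on $\mathscr{H}$. You correctly identify and resolve the one genuine subtlety (the multiplicity in the direct-integral decomposition versus the scalar-valued $L^2([0,\infty),d\wh\mu)$ appearing in the statement), which the paper glosses over, so your more detailed account is sound and consistent with the paper's intent.
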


Moreover, if $\lambda$ is in the point spectrum of $L$ with associated 
eigenspace $\mathscr{H}_\gl$, then for any $\psi_1,\psi_2\in \mathscr{H}$ one has
        \begin{align}
            \wh{\psi_1}(\lambda)\wh{\bar\psi_2}(\lambda) 
            =
            \<\operatorname{Proj}_{\mathscr{H}_\gl}\psi_1,
            \operatorname{Proj}_{\mathscr{H}_\gl}\psi_2\>,
        \end{align}
where $\operatorname{Proj}$ refers to the projection to the subspace $\mathscr{H}_\gl$. In the special case that $\mathscr{H}_\gl$ is one-dimensional and spanned by the normalized eigenfunction $\phi_\gl,$  we have that
        \begin{align}
            \wh{\psi_1}(\lambda)\wh{\bar\psi_2}(\lambda) 
            =
            \<\psi_1,\phi_\gl\>
            \<\phi_\gl,\psi_2\>.
        \end{align}


  The group $G$ acts by right regular representation on $\mathscr{H}$. The Hilbert space $\mathscr{H}$ decomposes into components as follows
  \begin{align*}
    \mathscr{H} = \mathscr{H}_0 \oplus \mathscr{H}_1  \oplus \dots  \oplus \mathscr{H}_k  \oplus \mathscr{H}^{tempered} 
  \end{align*}
  where $\mathscr{H}_i$ is a finite dimensional eigenspace with $\Delta$-eigenvalue $\lambda_i$ and $\mathscr{H}^{tempered}$ denotes the tempered spectrum.

  \section{Proof of Theorem \ref{thm:m general}}
  \label{s:m larger}

We follow the standard procedure described in the introduction of smoothing the counting function, as follows.
Let 
  \begin{align*}
    \chi_T(g) : = \one_{\|e_m g\|<T} = \begin{cases}
      1 & \mbox{ if } r \le T \\
      0 & \mbox{ otherwise,}
    \end{cases}
  \end{align*}
where $g$ is decomposed as above into $g=n_H a_H k_H a(r)k$.
  Let
  \begin{align*}
    F_T(g) = \sum_{\gamma\in \Gamma_H \bk \Gamma}\chi_T(\gamma g),
  \end{align*}
  whence $N_\Gamma(T) = F_T(e)$.


For $\vep>0$, choose a smooth, nonnegative, right-$K$-invariant bump function $\psi=\psi_\vep$
supported in an $\vep$-neighborhood of the identity coset of $G/K$, with $\int_{G/K}\psi=1$, so that, for any $\g\in\G$,
\begin{equation}
    \label{eq:PsiGamma}
\int_{G/K}
\chi_T(\g g)
\psi(g)
dg
=
\begin{cases}
      1 & \mbox{ if } \|e_m\g\|<T(1-c\vep) \\
      0 & \mbox{ if } \|e_m\g\|>T(1+c\vep).
    \end{cases}
\end{equation}
Since $G/K\cong N_H\times A_H\times A$ has dimension: 
$$\frac{m(m-1)}2+(m-2)+1
=
\frac{(m+2)(m-1)}2
,$$ 
such a $\psi$ can be constructed with 
$$
\|\psi\|_{L^2(G/K)}\ll \vep^{-\frac{(m+2)(m-1)}4}.
$$
  Let $\Psi=\Psi_\vep
\in L^2(\Gamma \bk G/K)$ denote the $\Gamma$-average of $\psi_\vep$
  \begin{align*}
    \Psi(g) := \sum_{\gamma \in \Gamma} \psi(\gamma g).
  \end{align*}
It follows that
\begin{equation}
    \label{eq:PsiL2GL3}
\|\Psi\|_{L^2(\Gamma\bk G/K)}\ll \vep^{-\frac{(m+2)(m-1)}4}.
\end{equation}

  Then our smoothed count is given by
  \begin{align*}
    \wt{N}(T) : = \< F_T, \Psi\>_{L^2(\Gamma\bk G/K)}.
  \end{align*}
  For this smooth count, we have the following asymptotic.

  \begin{theorem}\label{thm:smooth m}
    For any $\Gamma < \SL_m(\Z)$ of finite co-volume we have
    \begin{align}\label{smooth m}
      \wt{N}(T) = c_0(\vep)T^{m}+ c_1(\vep) T^{m-s_1} + \dots + c_k(\vep) T^{m-s_k} + O(\vep^{-(m+2)(m-1)/4} T^{m/2}),
    \end{align}
    where for any $i = 1, \dots, k$ we have that $c_i(\vep) = C_i(1+O(\vep))$, where $C_i$ are independent of $\vep$. 
  \end{theorem}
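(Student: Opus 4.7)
The strategy is to unfold the pairing $\wt{N}(T)=\<F_T,\Psi\>$ into a one-variable integral, decompose $\Psi$ spectrally via the Abstract Spectral Theorem \ref{thm:AST}, solve an explicit Euler ODE for each exceptional spectral component, and bound the tempered remainder using Parseval together with the $L^2$ estimate \eqref{eq:PsiL2GL3}.

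\emph{Unfolding.} The $\Gamma_H\bk\Gamma$ coset sum defining $F_T$ unfolds to give $\wt N(T)=\int_{\Gamma_H\bk G/K}\chi_T(g)\,\overline{\Psi(g)}\,dg$. In the coordinates \eqref{eq:coords}, $\chi_T$ depends only on $r$, and by Theorem \ref{thm:stru m} the Haar measure factors as $dg=dh\,da\,dk$ with $dh$ the Haar measure on $H$---in particular no Jacobian intervenes between the $\Gamma_H\bk H$-integration and the ambient $G$-integration. Therefore
\[
\wt{N}(T)=\int_0^T r^{m-1}\,\Phi(r)\,dr,\qquad \Phi(r):=\int_{\Gamma_H\bk H}\int_{K_H\bk K}\overline{\Psi\bigl(h\,a(r)\,k\bigr)}\,\rho_2(\vect{\theta})\,dk\,dh.
\]
This clean one-dimensional reduction is precisely the payoff of the coordinate system engineered in Theorem \ref{thm:stru m} and obviates the ``second smoothing'' step of the standard homogeneous-dynamics approach.

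\emph{Euler ODE and exceptional terms.} By Theorem \ref{thm:AST}, decompose $\Psi=\sum_{i=0}^{k}\Psi_i+\Psi^{\text{temp}}$, and correspondingly $\Phi=\sum_i\Phi_i+\Phi^{\text{temp}}$. Because the Casimir commutes with $H$- and $K$-translations, and hence with the averagings defining $\Phi_i$, each $\Phi_i$ is a $\Delta$-eigenfunction on $H\bk G/K$ with eigenvalue $\lambda_i$, depending only on $r$. By \eqref{Cas m} it then satisfies the Euler ODE
\[
\tfrac{4}{m^2}\bigl(r^2\,\Phi_i''(r)+r\,\Phi_i'(r)\bigr)=\lambda_i\,\Phi_i(r),
\]
whose two-dimensional solution space is spanned by $r^{s_i}$ and $r^{-s_i}$ (and by $1,\log r$ when $\lambda_0=0$). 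The growing solution is excluded by boundedness of the finite-dimensional eigenspaces $\mathscr{H}_i$, so $\Phi_i(r)=B_i\,r^{-s_i}$. Integrating $\int_0^T r^{m-1}\Phi_i(r)\,dr$ produces the main term $c_0(\vep)T^m$ (from $\Phi_0$) and the secondary terms $c_i(\vep)T^{m-s_i}$. The dependence $c_i(\vep)=C_i(1+O(\vep))$ follows from Taylor-expanding the projection coefficient $\<\Psi,\phi_i\>=\int_{G/K}\psi_\vep\,\overline{\phi_i}\,dg$ in $\vep$, using smoothness of $\phi_i$ at the identity coset.

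\emph{Tempered remainder: the main obstacle.} The principal technical challenge is to establish
\[
\wt N^{\text{temp}}(T):=\int_0^T r^{m-1}\,\Phi^{\text{temp}}(r)\,dr=\<F_T,\Psi^{\text{temp}}\>\ \ll\ \vep^{-(m+2)(m-1)/4}\,T^{m/2}.
\]
At the tempered threshold $\lambda=1$, the same Euler ODE yields the critical exponent $r^{\pm m/2}$, suggesting that tempered periods average like $r^{-m/2}$; integrating $r^{m-1}\cdot r^{-m/2}$ up to $T$ gives $T^{m/2}$, and combining with $\|\Psi\|_{L^2}\ll\vep^{-(m+2)(m-1)/4}$ from \eqref{eq:PsiL2GL3} yields the claimed bound. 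The delicacy is in making this tempered decay rigorous through the continuous spectrum: the plan is to apply Parseval (Theorem \ref{thm:AST}(i)) and Cauchy--Schwarz in the spectral $L^2([0,\infty),d\wh\mu)$, exploiting the fact that for any orthonormal tempered test function $\phi$, direct unfolding plus the Euler ODE analysis forces $|\<F_T,\phi\>|\ll T^{m/2}$. This $L^2$-method---which avoids any Eisenstein-series construction---is the crux of the technique and builds on the approach of \cite{Kontorovich2009,KontorovichLutsko2022,Lutsko2022}.
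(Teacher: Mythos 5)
Your proposal correctly identifies the overall architecture: unfold, reduce to a one-variable Euler ODE via the structure theorem, separate exceptional and tempered spectrum, and bound the tempered part by Parseval combined with \eqref{eq:PsiL2GL3}. But it departs from the paper at the crucial technical step and, as written, has a gap precisely where you flag the ``delicacy.''

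The paper does not decompose $\Psi$ spectrally and then solve a homogeneous ODE component-by-component. Instead it works with the \emph{inhomogeneous} equation \eqref{eq:fDiffEq}, valid for \emph{any} $\lambda$, whose right-hand side involves $(\Delta-\lambda)\Psi$. Following the argument of \cite[Lemma 3.3, Proposition 3.5]{Kontorovich2009}, this yields $\wt N(T)=K_T(\lambda)\wt N(1)+O(\|(\Delta-\lambda)\Psi\|)$ for every $\lambda$ and every admissible $\Psi$, and from the arbitrariness of $\Psi$ and $\lambda$ one bootstraps to the operator identity $F_T=K_T(\Delta)F_1$ almost everywhere (Theorem \ref{thm:Main Ident}). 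It is this identity, applied at the level of the spectral transform, that gives $\wh{F_T}(\lambda)=K_T(\lambda)\wh{F_1}(\lambda)$ for $\wh\mu$-a.e.\ $\lambda$, from which both the exceptional main terms and the tempered bound follow cleanly. Your sketch of the tempered remainder instead invokes ``orthonormal tempered test functions $\phi$'' and the homogeneous ODE for $\<F_T,\phi\>$; but the tempered spectrum is continuous, there are no $L^2$ tempered eigenfunctions, and a homogeneous ODE argument for individual spectral parameters has no rigorous meaning there. Establishing the Main Identity is exactly the content that makes this part of the argument close, and it is absent from your proposal.

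A second, smaller point: your exclusion of the growing solution $r^{s_i}$ ``by boundedness of the finite-dimensional eigenspaces $\mathscr{H}_i$'' is not the paper's argument and is not obviously correct. Exceptional eigenfunctions other than the constant (e.g., residual Eisenstein-type functions) need not be uniformly bounded on $\Gamma\bk G/K$, and the period $\int_{\Gamma_H\bk H}\phi_i(ha(r))\,dh$ can a priori grow. The paper instead eliminates the growing term using the trivial a priori bound $\wt N(T)\ll T^{m}$ (lattice point counting), which forces $A_+=0$ in the expansion $\wt N(T)=A_+\alpha_+(T)+A_-\alpha_-(T)+O(\cdot)$. You should replace your boundedness argument with this cleaner trivial-bound argument, and you should add the derivation of the Main Identity (or at least a precise reference to the inhomogeneous ODE analysis of \cite{Kontorovich2009}) before the tempered estimate; otherwise the proof does not go through.
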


\begin{proof}[Proof of Theorem \ref{thm:m general} from Theorem \ref{thm:smooth m}]  
After unfolding both $F_T$ and $\Psi$, we have that: 
  \begin{align*}
     \wt{N}(T) &= \sum_{\g\in\Gamma_H\bk \Gamma} \int_{G/K}  \chi_{T}(\gamma g)\psi_\vep(g) \rd g.
  \end{align*}
  It now follows from \eqref{eq:PsiGamma} that
  \begin{align*}
    \wt{N}(T(1-c\vep)) \le N_{\Gamma}(T) \le \wt{N}(T(1+c\vep)).
  \end{align*}
  From here, we optimize the parameter $\vep$ by choosing $\vep = T^{\frac{-2m}{(m+2)(m-1)+4}}$; this leads to  Theorem \ref{thm:m general} with the error term claimed in \eqref{eq:etamIs}. 
\end{proof}

The remainder of the paper is devoted to the proof of Theorem \ref{thm:smooth m}.

  \subsection{Unfolding and the differential equation}\label{sec:DiffEq}

  By unfolding, using the decomposition $g=ha(r)k$, and the calculation of Haar measure in 
 \eqref{Haar m}, 
  our smooth count becomes
  \begin{align*}
    \wt{N}(T) &= \int_{\Gamma \bk G/K} \sum_{\gamma \in \Gamma_H\bk \Gamma} \chi_T(\gamma g) \Psi(g) \rd g\\
    &= \int_{\Gamma_H \bk G/K}  \chi_T(g) \Psi(g) \rd g\\
    &= \int_0^\infty \chi_T(r)r^{m-1} \left(\int_{\Gamma_H \bk H} \Psi(ha(r)) \rd h \right)  \rd r,
  \end{align*}
  since $\Psi$ is right $K$-invariant. 
  Let $f(r): = \int_{\Gamma_H \bk H} \Psi(ha(r)) \rd h$ denote the quantity inside the brackets.

  Then using Theorem \ref{thm:stru m} we know that, for any value of $\lambda$, $f$ satisfies the differential equation
  \begin{equation}\label{eq:fDiffEq}
    \left(\frac{4}{m^2}(r^2\partial_{rr} + r\partial_r) - \lambda\right)f(r)= g(r)
  \end{equation}
  with $$g(r) :  = \int_{\Gamma_H \bk H} (\Delta -\lambda) \Psi( h a(r )) \rd h.$$
  For $\lambda\neq0$,  the homogeneous case  ($g \equiv 0$) in \eqref{eq:fDiffEq}
  has two  solutions, namely $f_\pm(r)=A_\pm\, r^{m-1\pm s}$, for some constants $A_\pm$; here we have written $\lambda = \frac{4}{m^2}s^2$ with $s>0$. 
 Write
  \begin{align*}
    \alpha_{\pm}(T) &: =  \int_0^\infty \chi_T(r) r^{m-1\pm s}  \rd r \\
    & = \frac{1}{m\pm s} T^{m\pm s}
    .
  \end{align*}
  Using similar lines of proof of \cite[Proof of Lemma 3.3]{Kontorovich2009}, we can thus deduce
  \begin{lemma} \label{lem:K3.3}
    For any right $K$ invariant function $\Psi$, $T>0$ large enough and any $\lambda >0$, we have
      \begin{align}
        \<F_T,\Psi\> =  A_+ \alpha_+(T)+A_-\alpha_-(T) + O(\|(\Delta- \lambda)\Psi\|).
    \end{align}
    \end{lemma}
    \begin{proof}
        Using the solutions to the homogeneous case, $f_{\pm}$ we can derive the solutions to the inhomogeneous cases using the method of variation of parameters. With that, we arrive at the inhomogeneous solution
        \begin{align*}
            f(r) = f_+(r)+f_-(r) + u(r)r^{m-1+s} +v(r)r^{m-1-s},
        \end{align*}
        where 
        \begin{align*}
            u(r): =  \int_r^{T} \frac{g(t)}{t^{m-2+s}}\rd t
            \qquad
            \text{ and }
            \qquad
            v(r): =  \int_r^{T} \frac{g(t)}{t^{m-2-s}}\rd t
        \end{align*}
        Integrating $f$ against $\chi_T(r)$ from $0$ to $\infty$ yields
        \begin{align*}
            \<F_T,\Psi\> = A_+\alpha_+(T) + A_1\alpha_-(T) + I + II,
        \end{align*}
        where
        \begin{align*}
            I: = \int_0^\infty \chi_T(r) r^{m-1+s}u(r) \rd r \qquad
            \text{ and }
            \qquad
            II: = \int_0^\infty \chi_T(r) r^{m-1-s}v(r) \rd r.
        \end{align*}
        Next, we apply integration by parts, yielding
        \begin{align*}
             I = \left[\frac{r^{ m+s}u(r)}{m+s}\right]^{T}_0 + \frac{1}{m+s} \int_0^T r^{2s+2}g(r)\rd r.
        \end{align*}
        The boundary term is then $0$ since $u(T)=0$ (the same identity holds for $II$)

        Finally, opening up the definition of $g$, using that $\Gamma \bk G$ is finite, and applying the triangle inequality and integration by parts (as in \cite[p 33]{Kontorovich2009}) we conclude
        \begin{align*}
            \max(I,II) = O(\|(\Delta-\lambda)\Phi\|).
        \end{align*}
        
    \end{proof}

  However, note that we can trivially bound $\wt{N}(T) \ll T^{m}$, hence $A_+=0$. Thus we can in fact write
  \begin{align}
    F_T =  A \alpha(T) + O(\|(\Delta- \lambda)\Psi\|),
  \end{align}
  where $\alpha(T) = \alpha_-(T)$ and $A=A_-$. Hence we can solve for $A$ and write
  \begin{align}\label{N Ks}
    \<F_T,\Psi\> = K_T(\lambda)\<F_1,\Psi\> + O(\|(\Delta - \lambda)\Psi\|),
  \end{align}
  with $K_T(\lambda) = \frac{\alpha(T)}{\alpha(1)}$. The following theorem states that, since \eqref{N Ks} holds for any $\Psi$ and any $\lambda$, we can in fact show that the error vanishes. That is, \eqref{N Ks} shows that, if $\Psi$ were an eigenfunction of the quadratic Casimir operator, then $K_T$ grows the ball of radius $1$ to the ball of radius $T$. Since this holds for all $\lambda$, we can choose a particular test function and show that the error term is equal to $0$. This is the key step present in Lax-Phillips's argument.        
  
    Note when considering a function of an operator, such as $K_{T}(\Delta)$, we define it via a power series expansion of $K_{T}$.
  \begin{theorem}[Main Identity]\label{thm:Main Ident}
    For $T$ large enough we have
    \begin{align}
      F_T(g) =  K_{T}(\Delta)F_{1}(g)
    \end{align}
    almost everywhere. Moreover
    \begin{align}\label{K bounds}
      K_{T}(\lambda) = \begin{cases}
        cT^{m-s} & \mbox{ if } s < m/2 \\
        cT^{m/2} & \mbox{ if } s =m/2+it.
        \end{cases}
    \end{align}
  \end{theorem}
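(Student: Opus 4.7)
The plan is to promote the $\Psi$-level inequality \eqref{N Ks} to the operator-valued identity $F_T=K_T(\Delta)F_1$ via the Abstract Spectral Theorem. Writing $\wt N(T)=\langle F_T,\Psi\rangle$ and $\wt N(1)=\langle F_1,\Psi\rangle$, equation \eqref{N Ks} reads
$$\bigl|\langle F_T-K_T(\lambda)F_1,\,\Psi\rangle\bigr|\ \ll\ \|(\Delta-\lambda)\Psi\|$$
for every $\Psi\in\mathscr H$ and every admissible $\lambda$. Applying Parseval \eqref{API} together with the diagonalization \eqref{eq:Ldiag}, this becomes
$$\left|\int_0^\infty\bigl(\wh{F_T}(\mu)-K_T(\lambda)\wh{F_1}(\mu)\bigr)\overline{\wh\Psi(\mu)}\,d\wh\mu(\mu)\right|\ \ll\ \left(\int_0^\infty|\mu-\lambda|^2|\wh\Psi(\mu)|^2\,d\wh\mu(\mu)\right)^{1/2}.$$

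Next, I would exploit the freedom in $\Psi$ by localizing $\wh\Psi$ near any prescribed $\mu_0$ in the spectrum and then setting $\lambda=\mu_0$. For an exceptional eigenvalue $\mu_0=\lambda_i<1$, take $\Psi\in\mathscr H_i$, so that $(\Delta-\mu_0)\Psi=0$ identically; the eigenspace Parseval formula from Theorem \ref{thm:AST} then forces $\wh{F_T}(\mu_0)=K_T(\mu_0)\wh{F_1}(\mu_0)$ exactly. For $\mu_0$ in the continuous tempered spectrum, pick $\wh\Psi$ supported in a window $[\mu_0-\epsilon,\mu_0+\epsilon]$; the right-hand side above is then $O(\epsilon\|\wh\Psi\|)$, and dualizing over all such $\wh\Psi$ via Riesz representation gives
$$\left(\int_{\mu_0-\epsilon}^{\mu_0+\epsilon}\bigl|\wh{F_T}(\mu)-K_T(\mu_0)\wh{F_1}(\mu)\bigr|^2\,d\wh\mu(\mu)\right)^{1/2}\ \ll\ \epsilon.$$
Combined with continuity of $K_T$ near $\mu_0$, a Lebesgue-differentiation argument yields $\wh{F_T}(\mu)=K_T(\mu)\wh{F_1}(\mu)$ for $\wh\mu$-a.e.\ $\mu$, which by unitarity of the spectral transform upgrades to $F_T=K_T(\Delta)F_1$ almost everywhere on $\Gamma\bk G/K$. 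The size bounds \eqref{K bounds} then follow by direct calculation from $K_T(\lambda)=\alpha_-(T)/\alpha_-(1)=T^{m-s}$ up to a constant: when $s<m/2$ is real (exceptional spectrum) we recover $|K_T|\ll T^{m-s}$, and on the tempered parametrization $s=m/2+it$ the modulus is precisely $T^{m/2}$.

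The main technical obstacle I anticipate is the clean split between the atomic and continuous parts of the abstract spectral measure $\wh\mu$. Eigenvalues must be handled via literal finite-dimensional projections onto $\mathscr H_i$, where exact eigenfunctions make the error term vanish identically; the continuous part requires the approximate-delta argument with spectral projections onto shrinking intervals, which depends on Lebesgue differentiation of the abstract measure and on the mild regularity of $K_T$ along the critical line. A secondary bookkeeping issue is verifying that the spectrally-localized $\Psi$ can be realized as an honest right-$K$-invariant automorphic function, but this is automatic from the Borel functional calculus for the self-adjoint $\Delta$. Since this runs in essentially complete parallel with \cite[Proof of Proposition 3.5]{Kontorovich2009}, the detailed verification is routine.
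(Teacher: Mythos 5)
Your proposal follows essentially the route that the paper sketches and then defers to \cite[Proposition~3.5]{Kontorovich2009}: exploit that \eqref{N Ks} holds for a rich family of test functions $\Psi$ and all $\lambda$, pass through Parseval and the diagonalization $\wh{L\phi}(\lambda)=\lambda\wh\phi(\lambda)$, and then spectrally localize $\Psi$ near each $\mu_0$ (exact eigenfunctions on the discrete part; shrinking windows plus Riesz and Lebesgue differentiation on the tempered part). The endgame calculation of \eqref{K bounds} from $K_T(\lambda)=\alpha_-(T)/\alpha_-(1)=T^{m-s}$, splitting into $s<m/2$ real versus $s=m/2+it$, is also correct. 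So the plan is sound and consistent with what the paper intends.

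The one place where your write-up is too quick is the claim that admissibility of the spectrally-localized $\Psi$ is ``automatic from the Borel functional calculus.'' The bound \eqref{N Ks} is not a formal $L^2$ statement: its derivation unfolds $\langle F_T,\Psi\rangle$ to $\int_0^\infty\chi_T(r)r^{m-1}f(r)\,dr$ with $f(r)=\int_{\Gamma_H\backslash H}\Psi(ha(r))\,dh$, and then uses the ODE \eqref{eq:fDiffEq}, so it needs $\Psi$ smooth \emph{and} integrable over $\Gamma_H\backslash H$ on each $A$-slice, so that $f$ and $g$ are honest twice-differentiable functions. A projection $P_{[\mu_0-\epsilon,\mu_0+\epsilon]}\Psi_0$ does land in $\bigcap_n\operatorname{Dom}(\Delta^n)$ and hence is smooth by elliptic regularity, but Borel functional calculus gives no control on its behavior along $\Gamma_H\backslash H$; that is the actual content one must supply (e.g.\ by approximating in a topology strong enough that both sides of \eqref{N Ks} are continuous and then passing to the limit). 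Since this is precisely the technical heart of \cite[Proposition 3.5]{Kontorovich2009}, labeling it ``routine'' is the step a referee would ask you to expand.
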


  \begin{proof}
      Since many of the detail can be found in \cite[Prop 3.5 + Theorem 3.2]{Kontorovich2009}, we will prove this statement for $s<m/2$.
      As in \eqref{N Ks}, we can write
      \begin{align*}
         \<F_T,\phi\> = K_T(\lambda)\<F_1,\phi\> + O(\|(\Delta - \lambda)\phi\|).
      \end{align*}
      Consider the function
      \begin{align*}
          G_T &: = F_T - K_T(\Delta)F_1.
      \end{align*}
    By adding and subtracting like terms, for any $K$-invariant function $\phi$, we write
    \begin{align}
    \begin{aligned}
        \label{GT exp}
        \<G_T, \phi\> = \left(\<F_T, \phi\> 
                      - cT^{m-s}\right)
                      &-\left(\<K_T(\Delta)F_1,\phi\> - K_T(\lambda)\<F_1, \phi\>\right)\\
                      & -\left(K_T(\lambda)\<F_1,\phi\>-K_T(\lambda)c\right)\\
                      &  -\left(K_T(\lambda)c -cT^{m-s} \right).
    \end{aligned}
    \end{align}
    We claim that each of the terms in the brackets on the right hand side is $O( \|(\Delta-\lambda)\phi \|)$. The bottom row follows from the definition of $\alpha$ and $K_T$. The second from the bottom row is straightforward. The second term in the top row requires the use of the spectral theorem and the mean value theorem (see \cite[p 16]{Kontorovich2009} for a general argument) and the first bracket is bounded by Lemma \ref{lem:K3.3}.
    Thus we have the inequality
    \begin{align}\label{GT bound}
        \<G_T,\phi\> = O( \|(\Delta -\lambda)\phi \|)
    \end{align}
    for any $\lambda$ and any $\phi$.   

    Finally, \cite[Proof Theorem 3.2]{Kontorovich2009} shows that, by testing against carefully selected test functions, if $G_T$ satisfies \eqref{GT bound} for any $\lambda$ and any $\phi$, then $G_T$ vanishes everywhere (the proof therein uses nothing else about $G_T)$.
      
  \end{proof}

   \subsection{Proof of Theorem \ref{thm:smooth m}}

  With the main identity at hand, we can proceed with the proof of Theorem \ref{thm:smooth m}.  By Parseval's identity \eqref{API}  
  \begin{align*}
   \wt{N}(T) &= \< F_T, \Psi \>_\Gamma\\
   &= \< \wh{F}_T, \wh{\Psi} \>_{\Spec(\Gamma)}\\
   &= \wh{F_T}(\lambda_0) \wh{\Psi}(\lambda_0) + \wh{F_T}(\lambda_1) \wh{\Psi}(\lambda_1) + \dots + \wh{F_T}(\lambda_{k}) \wh{\Psi}(\lambda_{k}) \\
   &\phantom{++++++++++++++}+ \int_{\cS^{temp}} \wh{F_T}(\lambda) \wh{\Psi}(\lambda) \rd\wh{\mu}(\lambda).
  \end{align*}
  Now for each point in the ``exceptional'' spectrum, $\wh{\Psi}(\lambda_i)$ is the projection onto the $i^{th}$ eigenspace, which is finite dimensional. Thus
  \begin{align*}
    \wh{\Psi}(\lambda_i) &= \<\Psi,\phi_i\>,
  \end{align*}
  and
  by the mean value theorem, we have that
  \begin{align*}
    \wh{\Psi}(\lambda_i) &= C_i+O(\vep).
  \end{align*}
  Moreover using \eqref{K bounds}, we have that  
  \begin{align*}
    \wh{F_T}(\lambda_i) &= T^{m-s_i} \<F_{1},\phi_i\>,\\
    &= c_i T^{m-s_i},
  \end{align*}
  for some constants $c_{i}$.

  As for the error term, we can use 
  \eqref{eq:Ldiag}
  and \eqref{K bounds} to achieve the following bound
  \begin{align*}
    \int_{\cS^{temp}} \wh{F_T}(\lambda) \wh{\Psi}(\lambda) \rd\wh{\mu}(\lambda) &=  \int_{\cS^{temp}} \wh{K_{T}(\Delta)F_T}(\lambda) \wh{\Psi }(\lambda) \rd\wh{\mu}(\lambda)\\
    &=  \int_{\cS^{temp}} K_{T}(\lambda)\wh{F_{1}}(\lambda) \wh{\Psi }(\lambda) \rd\wh{\mu}(\lambda)\\
    &\ll T^{m/2} \int_{\cS^{temp}} \wh{F_{1}}(\lambda) \wh{\Psi }(\lambda) \rd\wh{\mu}(\lambda).
  \end{align*}
  From here we again apply Parseval's identity and Cauchy-Schwarz yielding
  \begin{align*}
    \int_{\cS^{temp}} \wh{F_1}(\lambda) \wh{\Psi}(\lambda) \rd\wh{\mu}(\lambda)
    &\ll T^{m/2}\|F_{1}\|\|\Psi\|.
  \end{align*}
  Since $\Gamma\bk G$ has finite volume and $F_{1}$ is bounded, the  $L^2$ norm of $F_{1}$ is bounded. 
  The proof follows on using \eqref{eq:PsiL2GL3}.

  \small 
  \section*{Acknowledgements}

  Kontorovich is partially supported by NSF grant DMS-2302641, BSF grant 2020119. The authors would like to thank Valentin Blomer, Alex Gorodnik, Zeev Rudnick, and the referees for comments and suggestions on an earlier draft. 

  \bibliographystyle{alpha}
  \bibliography{biblio}

    \hrulefill

    \vspace{4mm}
     \noindent Department of Mathematics, Rutgers University, Hill Center - Busch Campus, 110 Frelinghuysen Road, Piscataway, NJ 08854-8019, USA. \emph{E-mail: \textbf{alex.kontorovich@rutgers.edu}}

    \vspace{4mm}
     \noindent Department of Mathematics,
     University of Houston, PGH Hall, 3551 Cullen Blvd, Houston, TX 77204, USA
     \emph{E-mail: \textbf{clutsko@uh.edu}}

\end{document}